\newtheorem{theorem}{Theorem}
\newtheorem{lemma}[theorem]{Lemma}
\newtheorem{proposition}[theorem]{Proposition}
\newtheorem{definition}[theorem]{Definition}
\begin{document}
\title{Uniqueness of mass-conserving self-similar solutions to\\
Smoluchowski's coagulation equation with inverse power law kernels} 
\author{Philippe Lauren\c{c}ot}
\address{Institut de Math\'ematiques de Toulouse, UMR~5219, Universit\'e de Toulouse, CNRS, F--31062 Toulouse Cedex 9, France}
\email{laurenco@math.univ-toulouse.fr}

\subjclass{45J05 - 35C06}
\keywords{coagulation, self-similar solution, mass conservation, uniqueness}

\maketitle

\begin{abstract}
Uniqueness of mass-conserving self-similar solutions to Smoluchowski's coagulation equation is shown when the coagulation kernel $K$ is given by $K(x,x_*)=2(x x_*)^{-\alpha}$, $(x,x_*)\in (0,\infty)^2$, for some $\alpha>0$. 
\end{abstract}

%
%
\pagestyle{myheadings}
\markboth{\sc{Ph. Lauren\c{c}ot}}{\sc{Uniqueness of mass-conserving self-similar solutions}}

\section{Introduction}\label{sec1}

Smoluchowski's coagulation equation is a mean-field model describing the growth of particles which increase their sizes by successive pairwise mergers \cite{Smol16, Smol17}. Denoting the size distribution function of particles of size $x\in (0,\infty)$ at time $t\in [0,\infty)$ by $f(t,x)$, Smoluchowski's coagulation equation reads
\begin{equation}
\partial_t f(t,x) = \frac{1}{2} \int_0^x K(x_*,x-x_*) f(t,x_*) f(t,x-x_*)\ \mathrm{d}x_* - \int_0^\infty K(x,x_*) f(t,x_*) f(t,x)\ \mathrm{d}x_* \label{a1}
\end{equation}
for $(t,x)\in (0,\infty)^2$. In \eqref{a1}, $K$ is the coagulation kernel and $K(x,x_*)=K(x_*,x)\ge 0$ measures the likelihood that a particle of size $x$ and a particle of size $x_*$ merge. More specifically, the first term in the right-hand side of \eqref{a1} accounts for the formation of particles of size $x$ resulting from the coalescence of two particles with respective sizes $x_*\in (0,x)$ and $x-x_*$. The second term in the right-hand side of \eqref{a1} describes the loss of particles of size $x$ as they coagulate with other particles of arbitrary sizes. Observe that, during the just described coagulation mechanism, there is no loss of matter and the total mass of the particles is expected to be constant throughout time evolution, that is,
\begin{equation}
M_1(f(t)) := \int_0^\infty x f(t,x)\ \mathrm{d}x = const.\ , \qquad t\ge 0\ . \label{a2}
\end{equation}
It is however well-known by now that infringement of the conservation of matter \eqref{a2} occurs for coagulation kernels $K$ growing sufficiently rapidly for large sizes, a typical example being $K(x,y)=(xy)^{\lambda/2}$ when $\lambda>1$ \cite{EMP02,Jeon98,Leyv83,Leyv03,LeTs81a,Ziff80}. This phenomenon is usually referred to as \textsl{gelation} and corresponds to a runaway growth in the coagulation process: the rapid growth of the coagulation kernel for large sizes enhances the formation of larger and larger particles and leads to the appearance of particles of infinite size (also called \textsl{giant particle}) in finite time. Since the size  distribution function $f$ only accounts for finite size particles, there is thus an escape of matter from the system of finite size particles towards giant particles.

When the conservation of matter \eqref{a2} holds true for all times, a key issue in the study of Smoluchowski's coagulation equation \eqref{a1} is its predictive behaviour, especially regarding the long term dynamics. In that direction, a commonly accepted conjecture is that the size distribution function $f$ displays a scaling behaviour as $t\to\infty$ which complies with conservation of matter, namely,
\begin{equation}
f(t,x)\sim \frac{1}{\sigma(t)^2} \varphi\left( \frac{x}{\sigma(t)} \right) \;\text{ as }\; t\to\infty\ , \label{a3}
\end{equation}
where $\sigma(t)$ denotes the mean size at time $t$ and $\varphi$ the scaling or self-similar profile which is expected to be non-negative and to have a finite mass, both being yet undetermined \cite{Leyv03,vDEr88}. The assertion \eqref{a3} is usually referred to as the \textsl{dynamical scaling hypothesis} in the literature and its validity turns out to be one of the main issues in the analysis of Smoluchowski's coagulation equation. Current research focuses on homogeneous coagulation kernels $K$ satisfying
\begin{equation}
K(\xi x,\xi x_*) = \xi^\lambda K(x,x_*)\ , \qquad (\xi,x,x_*)\in (0,\infty)^3\ , \label{a4}
\end{equation}
for some $\lambda\in (-\infty,1)$ (we leave aside the case $\lambda=1$ which is rather peculiar, see \cite{Bert02a,BNV16,HNV17,Leyv03,MePe04,vDEr88}, while $\lambda>1$ corresponds to the onset of gelation and thus a different dynamics \cite{Leyv03,vDEr88}). When $\lambda\in (-\infty,1)$, it is expected that 
\begin{equation}
(t,x)\mapsto \frac{1}{\sigma(t)^2} \varphi\left( \frac{x}{\sigma(t)} \right) \label{a5}
\end{equation}
is a self-similar solution to the coagulation equation \eqref{a1}. Inserting the ansatz \eqref{a5} in \eqref{a1} and using \eqref{a4} lead to the existence of a positive constant $w>0$ such that
\begin{equation}
\sigma(t) = (1+w(1-\lambda)t)^{1/(1-\lambda)}\ , \qquad t\ge 0\ ,  \label{a6}
\end{equation}
and
\begin{align}
w \left( x \partial_x \varphi(x) + 2 \varphi(x) \right) & + \frac{1}{2} \int_0^x K(x_*,x-x_*) \varphi(x_*) \varphi(x-x_*)\ \mathrm{d}x_* \nonumber\\
& \phantom{0123456789} - \int_0^\infty K(x,x_*) \varphi(x_*) \varphi(x)\ \mathrm{d}x_* = 0\ , \qquad x\in (0,\infty)\ , \label{a7}
\end{align}
thereby determining the mean size $\sigma$  (up to the constant $w$) and providing a nonlinear and nonlocal integrodifferential equation solved by the scaling profile $\varphi$. We are then left with investigating the existence of non-negative solutions $\varphi$ to \eqref{a7} with a prescribed total mass $M_1(\varphi)=\varrho$ for some given $\varrho>0$. When $K\equiv 2$, a family of explicit solutions $x\mapsto (w^2/\varrho)e^{-wx/\varrho}$ is known for a long time  and several existence results have been obtained in the past decade for various coagulation kernels \cite{BLLxx,BNV17,EMRR05,FoLa05,NiVe14a}. Special attention is also paid to the identification of the behaviour of the scaling profile for small and large sizes \cite{CaMi11,EsMi06,FoLa06a,NiVe11a,NiVe14a}, see also \cite{Leyv03,MNV11,vDEr88} for formal asymptotic expansions. 

Though not exhausted, the question of the existence of scaling profiles is thus answered in a satisfactory way by now. Nevertheless, we are still far away from a complete proof of the dynamical scaling hypothesis \eqref{a3} and a first step towards its validity is the uniqueness (up to scaling) of the scaling profile. The first result in that direction deals with the constant coagulation kernel  $K\equiv 2$ for which it is known that, given  $w>0$ and $\varrho>0$, there is a unique non-negative scaling profile $\varphi_{w,\varrho}$ solving \eqref{a7} and satisfying $M_1(\varphi_{w,\varrho})=\varrho$ \cite{MePe04}. It is actually given explicitly by $\varphi_{w,\varrho}(x) = (w^2/\varrho)e^{-wx/\varrho}$, $x\in (0,\infty)$, and the uniqueness proof relies on the fact that the Bernstein (or desingularized Laplace) transform $\mathcal{B}\varphi$ of any scaling profile $\varphi$ defined by 
\begin{equation*}
\mathcal{B}\varphi(\xi) := \int_0^\infty \left( 1 - e^{-x\xi} \right) \varphi(x)\ \mathrm{d}x\ , \qquad \xi\in (0,\infty)\ ,
\end{equation*}
is a solution to the ordinary differential equation
\begin{equation*}
w \xi \partial_\xi\mathcal{B}\varphi(\xi) + [\mathcal{B}\varphi(\xi)]^2 - w \mathcal{B}\varphi(\xi) = 0\ , \qquad \xi\in (0,\infty)\ , 
\end{equation*}
which can be solved explicitly. Though such a nice device does not extend to other coagulation kernels, the Laplace transform is also employed in \cite{NTV16b,NiVe14b} to prove the uniqueness of scaling profiles for coagulation kernels $K$ with homogeneity zero which are sufficiently small perturbations of the constant kernel. The coagulation kernels $K$ dealt with in \cite{NTV16b,NiVe14b} satisfy in particular 
\begin{equation*}
2 \le K(x,x_*) \le 2 + \varepsilon \left[ \left( \frac{x}{x_*}\right)^\alpha + \left( \frac{x_*}{x}\right)^\alpha \right]\ , \qquad (x,x_*)\in (0,\infty)^2\ ,
\end{equation*}
for $\alpha\in [0,1/2)$ and $\varepsilon>0$ sufficiently small. Let us also mention that some partial uniqueness results are also obtained in \cite{CaMi11} for the coagulation kernel $K(x,x_*) =x^\alpha x_*^\beta + x^\beta x_*^\alpha$ when $\alpha\in (-1,0]$, $\beta\in [\alpha,1)$, and $\lambda:=\alpha+\beta\in (-1,1)$. More precisely, it is shown in \cite{CaMi11} that prescribing the moments of order one and $\lambda$ guarantees the uniqueness of the scaling profile when $\alpha=0$. In the same vein, when $\alpha<0$, a similar result is available when prescribing not only the moments of order one, $\alpha$, and $\beta$, but also the behaviour for small sizes. 

The purpose of this note is to contribute to the uniqueness issue of scaling profiles for the particular class of coagulation kernels
\begin{equation}
K(x,x_*) := \frac{2}{(xx_*)^{\alpha}}\ , \qquad x\in (0,\infty)\ , \label{a8}
\end{equation}
which is introduced in \cite{ClKa99}, the parameter $\alpha$ being a positive real number. More precisely, the main result of this note reads:

\begin{theorem}\label{thma1}
Let $w>0$ and $\varrho>0$ and assume that the coagulation kernel $K$ is given by \eqref{a8} for some $\alpha>0$. There is a unique non-negative scaling profile $\varphi_{w,\varrho}$ satisfying
\begin{equation}
\varphi_{w,\varrho}\in \mathcal{C}^1(0,\infty) \cap \bigcap _{m\in\mathbb{R}} L^1(0,\infty,x^m\mathrm{d}x)\ , \qquad M_1(\varphi_{w,\varrho}) = \varrho\ , \label{a9}
\end{equation}
and \eqref{a7} for all $x\in (0,\infty)$.
\end{theorem}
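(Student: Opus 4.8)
The plan is to reduce \eqref{a7} to a single first-order functional equation for a desingularized Laplace transform, exploiting the product structure $K(x,x_*)=2x^{-\alpha}x_*^{-\alpha}$, and then to prove uniqueness of that transform by a contraction/continuation scheme; existence together with the fine properties \eqref{a9} is drawn from the general theory of self-similar profiles.

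First I set $g(x):=x^{-\alpha}\varphi(x)$. A direct computation shows that the coagulation operator in \eqref{a7}, rewritten in terms of $g$, coincides with the \emph{constant-kernel} operator applied to $g$: its gain part equals the convolution $g*g$ and its loss part equals $2g(x)M_{-\alpha}(\varphi)$, with $M_{-\alpha}(\varphi)=\int_0^\infty g$. Only the scaling term $w(x\partial_x\varphi+2\varphi)$ retains the weight, since it acts on $\varphi=x^\alpha g$ rather than on $g$. Testing \eqref{a7} against $x\mapsto 1-e^{-x\xi}$ and using the identity $(1-e^{-(x+x_*)\xi})-(1-e^{-x\xi})-(1-e^{-x_*\xi})=-(1-e^{-x\xi})(1-e^{-x_*\xi})$, I obtain
\[
w\left[ \mathcal{B}\varphi(\xi) - \xi\,\partial_\xi \mathcal{B}\varphi(\xi) \right] = \left[ \mathcal{B}g(\xi) \right]^2 , \qquad \xi\in(0,\infty),
\]
where the two Bernstein transforms are coupled, through $\varphi=x^\alpha g$, by the Weyl fractional relation $\partial_\xi\mathcal{B}g(\xi)=\frac{1}{\Gamma(\alpha)}\int_\xi^\infty (s-\xi)^{\alpha-1}\,\partial_s\mathcal{B}\varphi(s)\,\mathrm{d}s$. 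Since $\mathcal{B}\varphi(0)=0$ and $\partial_\xi\mathcal{B}\varphi(0)=M_1(\varphi)=\varrho$, integrating the above linear ODE yields the closed representation
\[
\mathcal{B}\varphi(\xi) = \varrho\,\xi - \frac{\xi}{w}\int_0^\xi \frac{[\mathcal{B}g(s)]^2}{s^2}\,\mathrm{d}s , \qquad \xi\in(0,\infty),
\]
so that a scaling profile is entirely encoded by a fixed point of the map $\mathcal{B}\varphi\mapsto\mathcal{B}g\mapsto\mathcal{B}\varphi$ built from this representation and the fractional relation.

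Uniqueness then reduces to showing that this fixed-point problem has at most one solution in the class of Bernstein transforms compatible with \eqref{a9}. I would first prove local uniqueness on a small interval $(0,\delta)$: there, using $\mathcal{B}g(s)=O(s)$ as $s\to0$ (which makes the singular integral convergent) together with the smoothing of the Weyl operator, the map is a contraction in a suitably weighted supremum norm, the normalization $\varrho$ being kept fixed. I would then propagate uniqueness from $(0,\delta)$ to all of $(0,\infty)$ by a Gronwall/continuation argument, estimating the difference of two solutions through the representation and the fractional coupling. Injectivity of the Bernstein transform finally gives $\varphi_1=\varphi_2$, while the scaling invariances of \eqref{a7} reduce the general case to a convenient normalization of $w$ and $\varrho$. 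For existence with \eqref{a9}, I would invoke the known construction of mass-conserving self-similar profiles for homogeneous kernels of negative homogeneity $\lambda=-2\alpha$ \cite{FoLa05,NiVe14a}, whose $\mathcal{C}^1$-regularity and the finiteness of all moments of every real order follow from the precise small- and large-size behaviour available for such kernels \cite{FoLa06a,NiVe11a,NiVe14a}, combined with the flux identity $w y^2\varphi(y)=2\int_0^y u^{1-\alpha}\varphi(u)\int_{y-u}^\infty v^{-\alpha}\varphi(v)\,\mathrm{d}v\,\mathrm{d}u$, which expresses $\varphi$ pointwise and bootstraps its regularity.

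The main obstacle is the fractional coupling between $\mathcal{B}\varphi$ and $\mathcal{B}g$. For $\alpha=0$ the two transforms coincide and the first display collapses to the explicitly solvable Riccati-type ODE of \cite{MePe04}; for $\alpha>0$ the nonlocal Weyl operator $\int_\xi^\infty(s-\xi)^{\alpha-1}(\cdots)\,\mathrm{d}s$ must instead be controlled simultaneously as $\xi\to0$ and as $\xi\to\infty$. Preventing this nonlocal term from destroying the local contraction, and continuing the uniqueness estimate globally in $\xi$, is the technical heart of the argument.
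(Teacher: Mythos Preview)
Your derivation of the Bernstein--transform identity
\[
w\bigl[\mathcal{B}\varphi(\xi)-\xi\,\partial_\xi\mathcal{B}\varphi(\xi)\bigr]=\bigl[\mathcal{B}g(\xi)\bigr]^2
\]
and of the Weyl relation
\[
\partial_\xi\mathcal{B}g(\xi)=\frac{1}{\Gamma(\alpha)}\int_\xi^\infty (s-\xi)^{\alpha-1}\,\partial_s\mathcal{B}\varphi(s)\,\mathrm{d}s
\]
is correct. What is missing is the proof itself: you name the ``technical heart'' but do not carry it out, and the strategy you outline faces a structural obstruction. The integral representation expresses $\mathcal{B}\varphi(\xi)$ through the values of $\mathcal{B}g$ on $(0,\xi)$, while the Weyl relation expresses $\mathcal{B}g(\xi)$ through the values of $\partial_s\mathcal{B}\varphi$ on $(\xi,\infty)$. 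The composed fixed-point map therefore involves $\mathcal{B}\varphi$ on the \emph{whole} half-line, not on any bounded interval, so there is no direction in which to propagate a ``local uniqueness on $(0,\delta)$'' by Gronwall or continuation. A contraction argument in a weighted sup norm would need an a~priori smallness of the nonlocal piece that you do not establish and that is not apparent; this is precisely why Laplace-transform uniqueness proofs in the literature are confined to kernels close to the constant one.

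The paper takes a different route, which you may find instructive. It works in physical space with $h(x)=x^{-\alpha}\varphi(x)$ and its tail integral $H(x)=\int_x^\infty h$, and derives a first-order equation for $H$ (Lemma~\ref{lemb3}). The decisive choice of normalization is $M_{-\alpha}(\varphi)=1$ (not $M_1(\varphi)=\varrho$), which makes $M_0(h_1)=M_0(h_2)=1$ and forces the difference $E=H_1-H_2$ to satisfy an equation whose linear-in-$E$ terms balance exactly. Multiplying by $\mathrm{sign}(E)$ and integrating, one finds that the resulting identity has a non-negative left-hand side and a non-positive right-hand side, so both vanish; this yields that $E$ has constant sign, and then $\int_0^\infty x^{\alpha-1}E\,\mathrm{d}x=0$ (a consequence of the $M_{-\alpha}$ normalization) gives $E\equiv 0$. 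The mass constraint $M_1=\varrho$ is recovered at the end by the scaling of Lemma~\ref{lemb1}. No transform, no fractional operator, and no contraction are needed.
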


Let us first recall that the existence of a scaling profile solving \eqref{a7} and enjoying the regularity and integrability properties \eqref{a9} can be shown by adapting arguments from \cite{EMRR05,FoLa05}, see \cite{BLLxx} and Proposition~\ref{propb1} below. Concerning uniqueness, the proof takes advantage of the specific structure of the coagulation kernel \eqref{a8}, as did those developed in \cite{MePe04,NTV16b,NiVe14b} for other kernels, but, instead of using the Laplace transform, we take a different route and use a weighted indefinite integral of $\varphi$ in the spirit of \cite{FoLa06b}. More precisely, given $w>0$ and a scaling profile $\varphi$ satisfying \eqref{a7}, we introduce 
\begin{equation*}
H(x) := \int_x^\infty \frac{\varphi(x_*)}{x_*^\alpha}\ \mathrm{d}x_*\ , \qquad x\in (0,\infty)\ ,
\end{equation*}
and derive an equation solved by $H$. We then show that this equation has a unique solution satisfying $H(0)=1$, the latter property corresponding to the additional assumption $M_{-\alpha}(\varphi)=1$. We finally use a scaling argument to connect the mass constraint $M_1(\varphi)=\varrho$ to $M_{-\alpha}(\varphi)=1$. 

As a final comment, while Theorem~\ref{thma1} provides a step further towards the validity of the dynamical scaling hypothesis \eqref{a3} for the coagulation kernel \eqref{a8}, the approach developed in this note does not seem to provide valuable information on the time-dependent problem. It is thus likely that the stability of $\varphi_{w,\varrho}$ (if true) requires additional ideas. In fact, as far as we know, the constant coagulation kernel $K\equiv 2$ is the only one with homogeneity strictly smaller than one for which the validity of \eqref{a3} is rigorously established \cite{KrPe94,MePe04}, the additive coagulation kernel $K(x,x_*)=x+x_*$ being also handled in \cite{MePe04}.  

Throughout the paper we use the following notation: given $m\in \mathbb{R}$, we set $X_m := L^1(0,\infty,x^m\mathrm{d}x)$ and put
\begin{equation*}
M_m(f) := \int_0^\infty x^m f(x)\ \mathrm{d}x \ , \qquad f\in L^1(0,\infty,x^m\mathrm{d}x)\ .
\end{equation*}
We also denote the positive cone of $X_m$ by $X_m^+$, that is, $X_m^+ := \left\{ f\in X_m\ :\ f\ge 0 \;\text{ a.e. in }\; (0,\infty) \right\}$. 

From now on, we fix $w>0$ and $\alpha>0$ and assume that the coagulation kernel $K$ is given by \eqref{a8}. 

\section{Scaling Profiles}\label{sec2}

We begin with the definition of a scaling profile solving \eqref{a7} for the particular choice \eqref{a8} of the coagulation kernel.

\begin{definition}\label{defb0}
A scaling profile is a function $\varphi\in X_1^+\cap \mathcal{C}^1(0,\infty)$ such that
\begin{equation}
\varphi\in \bigcap _{m\in\mathbb{R}} X_m\ , \label{b0}
\end{equation}
which satisfies \eqref{a7} for all $x\in (0,\infty)$. 
\end{definition}

We next collect some properties of scaling profiles and first state an existence result.

\begin{proposition}\label{propb1}
Let $\varrho>0$. There is at least one scaling profile $\varphi$ in the sense of Definition~\ref{defb0} such that $M_1(\varphi) = \varrho$. In addition,
\begin{equation}
w x_0^2 \varphi(x_0) = 2 \int_0^{x_0} \int_{x_0-x}^\infty x^{1-\alpha} x_*^{-\alpha} \varphi(x)\varphi(x_*)\ \mathrm{d}x_*\mathrm{d}x\ , \qquad x_0\in (0,\infty)\ . \label{b00} 
\end{equation}
\end{proposition}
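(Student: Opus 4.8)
The plan is to establish the existence assertion by a standard approximation-compactness argument, and then derive the identity \eqref{b00} by a direct manipulation of \eqref{a7} exploiting the specific form \eqref{a8} of the kernel. For the existence part, since the kernel $K(x,x_*) = 2(xx_*)^{-\alpha}$ has homogeneity $\lambda = -2\alpha < 0$, I would either invoke the existence theory already developed in \cite{BLLxx,EMRR05,FoLa05} (the excerpt explicitly says this can be done ``by adapting arguments from \cite{EMRR05,FoLa05}'') or sketch the approximation scheme: truncate the kernel to $K_n := K \wedge n$ (or restrict sizes to a bounded interval), solve the corresponding self-similar profile equation on the truncated problem using a fixed-point or compactness argument in a suitable weighted $L^1$ space, obtain uniform moment bounds $M_m(\varphi_n) \le C_m$ for all $m \in \RR$ using the strong decay of $K$ for large sizes and the regularizing effect for small sizes, extract a weakly convergent subsequence, and pass to the limit in the weak formulation. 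The prescribed mass $M_1(\varphi) = \varrho$ is enforced by a rescaling $\varphi \mapsto a\varphi(b\,\cdot)$ at the end, using the scaling invariance of \eqref{a7} noted later in the paper.

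For the identity \eqref{b00}, I would start from the profile equation \eqref{a7} written with $K(x,x_*) = 2(xx_*)^{-\alpha}$, namely
\begin{equation*}
w\bigl( x\varphi'(x) + 2\varphi(x) \bigr) + \int_0^x \frac{\varphi(x_*)\varphi(x-x_*)}{x_*^\alpha (x-x_*)^\alpha}\ \mathrm{d}x_* - 2\int_0^\infty \frac{\varphi(x)\varphi(x_*)}{x^\alpha x_*^\alpha}\ \mathrm{d}x_* = 0\ .
\end{equation*}
The first bracket is $w\,x^{-1}\partial_x(x^2\varphi(x))$, so multiplying through by $x$ gives $w\partial_x(x^2\varphi(x))$ plus two integral terms multiplied by $x$. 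The key step is then to integrate this identity over $(0,x_0)$: the left-hand term becomes $w x_0^2 \varphi(x_0)$ (the boundary term at $0$ vanishes because $x^2\varphi(x) \to 0$ as $x \to 0$, which follows from $\varphi \in X_m$ for all $m$, in particular from the integrability near zero). The gain term $\int_0^{x_0} x \int_0^x x_*^{-\alpha}(x-x_*)^{-\alpha}\varphi(x_*)\varphi(x-x_*)\,\mathrm{d}x_*\,\mathrm{d}x$ is treated by the substitution $y = x - x_*$ and a Fubini exchange, and the loss term $-2\int_0^{x_0} x^{1-\alpha}\varphi(x)\int_0^\infty x_*^{-\alpha}\varphi(x_*)\,\mathrm{d}x_*\,\mathrm{d}x$ is already in a usable form; combining the two integrals, the contribution over the region where both arguments stay below $x_0$ should cancel, leaving exactly $2\int_0^{x_0}\int_{x_0 - x}^\infty x^{1-\alpha}x_*^{-\alpha}\varphi(x)\varphi(x_*)\,\mathrm{d}x_*\,\mathrm{d}x$. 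This is the usual ``coagulation conservation'' computation, where integrating the gain term produces $\int\int_{x + x_* < x_0}$ and the loss term produces $\int\int$ over all of $(0,\infty)^2$ restricted to $x < x_0$, and the difference is the indicated tail integral.

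The main obstacle is the existence part rather than the identity: one must justify all the Fubini exchanges and the integrability of each term, which requires the a priori bound that $\varphi$ decays faster than any power at infinity and is integrable against $x^{-\alpha}$ near zero — these follow from \eqref{b0}, but obtaining \eqref{b0} itself for the constructed solution is the technical heart. Concretely, the superexponential decay is obtained from the loss term dominating in \eqref{a7} for large $x$ (since $\int_0^\infty x_*^{-\alpha}\varphi(x_*)\,\mathrm{d}x_* = M_{-\alpha}(\varphi)/2 \cdot$ constant is finite and strictly positive, the equation behaves like $w\partial_x(x^2\varphi) \approx -c\,x^{1-\alpha}\varphi$ for large $x$, forcing rapid decay), and the moment bounds for negative $m$ come from a careful Gronwall-type estimate on the truncated problem using that the gain term near zero is controlled by the smoothing coming from the convolution structure with the singular weight $x_*^{-\alpha}$. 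I would present the existence claim by citing \cite{BLLxx,EMRR05,FoLa05} and indicating these adaptations, and give the computation for \eqref{b00} in full since it is short and self-contained once the integrability is in hand.
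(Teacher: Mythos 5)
Your computational content is sound, but you and the paper order the logic differently, and the difference matters. The paper's proof is essentially a citation: the existence theory of \cite{BLLxx} (adapting \cite{EMRR05,FoLa05}) is invoked to produce a \emph{weak} solution already satisfying \eqref{b0}, \eqref{b00}, and $M_1(\varphi)=\varrho$ — that is, \eqref{b00} is the weak (integrated) formulation delivered directly by the construction — and only \emph{then} is $\mathcal{C}^1$-smoothness deduced \emph{from} \eqref{b00} via \cite{CaMi11}, after which \eqref{a7} holds pointwise. You instead take a $\mathcal{C}^1$ pointwise solution of \eqref{a7} as given by the existence theory and derive \eqref{b00} by multiplying by $x$, writing $w(x\varphi'+2\varphi)=w x^{-1}\partial_x(x^2\varphi)$, integrating over $(0,x_0)$, and performing the standard gain/loss cancellation; I checked this computation and it does yield exactly \eqref{b00}. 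The catch is that within the paper's architecture this would be circular: the regularity you are using is itself a consequence of \eqref{b00}. Your route is legitimate only if the cited existence theory hands you a pointwise $\mathcal{C}^1$ solution from the outset, which is not what these constructions typically produce — they produce weak solutions, and \eqref{b00} \emph{is} (up to equivalence) the natural weak formulation. So the honest version of your argument is the paper's: obtain \eqref{b00} from the construction, then bootstrap regularity. One further small gap: $x^2\varphi(x)\to 0$ as $x\to 0$ does not follow from the integrability \eqref{b0} alone (integrability gives no pointwise control); you need to observe first that $\partial_x(x^2\varphi)$ is integrable near $0$ (because $x$ times the gain and loss terms is controlled by $M_{1-\alpha}(\varphi)M_{-\alpha}(\varphi)$), so that $x^2\varphi(x)$ has a limit $L$ as $x\to 0$, and then $L>0$ would contradict $\varphi\in X_1$ (or $X_0$). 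With that repair, your derivation of \eqref{b00} from the pointwise equation is a correct, self-contained verification that the paper chooses not to write out.
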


\begin{proof}
The existence of a (weak) solution $\varphi\in X_1^+$ to \eqref{a7} which satisfies \eqref{b0}, \eqref{b00}, and  $M_1(\varphi) = \varrho$ can be performed by adapting suitably the arguments developed in \cite{EMRR05,FoLa05} for related coagulation kernels, see \cite{BLLxx} for a complete proof. The $C^1$-smoothness of $\varphi$ next follows from \eqref{b00} according to \cite{CaMi11} and, together with the already established integrability properties, implies that $\varphi$ solves \eqref{a7} pointwisely.
\end{proof}

We next report a scaling invariance of scaling profiles which stems from the homogeneity of the coagulation kernel $K$.

\begin{lemma}\label{lemb1}
Let $\varphi$ be a scaling profile in the sense of Definition~\ref{defb0} and $a>0$. Then  the function $\varphi_a$ defined by $\varphi_a(x) := a^{1-2\alpha} \varphi(ax)$, $x\in (0,\infty)$, is also a scaling profile in the sense of Definition~\ref{defb0} and
\begin{equation*}
M_m(\varphi_a) = a^{-m-2\alpha} M_m(\varphi)\ , \qquad m\in\mathbb{R}\ .
\end{equation*}
\end{lemma}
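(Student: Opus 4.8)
The plan is to verify directly the three defining requirements of a scaling profile for $\varphi_a$ — namely $\varphi_a\in X_1^+\cap\mathcal{C}^1(0,\infty)$, the integrability \eqref{b0}, and the equation \eqref{a7} — establishing the moment identity first since it is exactly what supplies the integrability.

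First I would record the effect of the change of variables $y=ax$ on moments. Writing $M_m(\varphi_a)=a^{1-2\alpha}\int_0^\infty x^m\varphi(ax)\,\mathrm{d}x$ and substituting $y=ax$ yields $M_m(\varphi_a)=a^{-m-2\alpha}M_m(\varphi)$ at once. Since $\varphi\in\bigcap_{m\in\mathbb{R}}X_m$, this gives $\varphi_a\in\bigcap_{m\in\mathbb{R}}X_m$ and, in particular, $M_1(\varphi_a)=a^{-1-2\alpha}M_1(\varphi)<\infty$; as $\varphi_a\ge0$ is clear from $\varphi\ge0$, we get $\varphi_a\in X_1^+$. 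The regularity is equally immediate: $\partial_x\varphi_a(x)=a^{2-2\alpha}\varphi'(ax)$ is continuous on $(0,\infty)$ because $\varphi\in\mathcal{C}^1(0,\infty)$.

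The substantive step is that $\varphi_a$ satisfies \eqref{a7}. Here I would fix $x\in(0,\infty)$, set $y=ax$, and show that each of the three terms of \eqref{a7} evaluated at $(\varphi_a,x)$ equals $a^{1-2\alpha}$ times the corresponding term at $(\varphi,y)$. For the transport term, $x\partial_x\varphi_a(x)+2\varphi_a(x)=a^{2-2\alpha}x\varphi'(ax)+2a^{1-2\alpha}\varphi(ax)=a^{1-2\alpha}\bigl(y\varphi'(y)+2\varphi(y)\bigr)$ using $x=y/a$. For the gain term, inserting $\varphi_a(x_*)\varphi_a(x-x_*)=a^{2-4\alpha}\varphi(ax_*)\varphi(a(x-x_*))$ and substituting $y_*=ax_*$ — so that $x-x_*=(y-y_*)/a$, the kernel $K(x_*,x-x_*)=2\bigl(x_*(x-x_*)\bigr)^{-\alpha}$ contributes a factor $a^{2\alpha}$, and the measure contributes $a^{-1}$ — collapses the powers of $a$ to $a^{2-4\alpha+2\alpha-1}=a^{1-2\alpha}$, leaving the gain term for $\varphi$ at $y$. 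For the loss term, $\int_0^\infty K(x,x_*)\varphi_a(x_*)\,\mathrm{d}x_*=2x^{-\alpha}M_{-\alpha}(\varphi_a)=2x^{-\alpha}a^{-\alpha}M_{-\alpha}(\varphi)$ by the moment identity with $m=-\alpha$; multiplying by $\varphi_a(x)=a^{1-2\alpha}\varphi(y)$ and using $x^{-\alpha}=a^{\alpha}y^{-\alpha}$ once more produces $a^{1-2\alpha}$ times the loss term for $\varphi$ at $y$. Summing, \eqref{a7} for $\varphi_a$ at $x$ equals $a^{1-2\alpha}$ times \eqref{a7} for $\varphi$ at $y$, which vanishes because $\varphi$ is a scaling profile and $y=ax$ ranges over all of $(0,\infty)$; hence $\varphi_a$ is a scaling profile.

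The computation is routine; the only point demanding a little care is the bookkeeping of the exponents of $a$ in the quadratic terms, where the two factors of $\varphi$, the homogeneity $\lambda=-2\alpha$ of $K$, and the Jacobian of the rescaling must combine to precisely $a^{1-2\alpha}$ — the very exponent produced by the linear transport term, which is what makes the rescaling a symmetry of \eqref{a7} — together with invoking the absolute convergence of the integrals (ensured by $\varphi\in\bigcap_{m}X_m$, in particular $M_{-\alpha}(\varphi)<\infty$) to legitimize the change of variables.
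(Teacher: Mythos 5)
Your verification is correct: the moment identity follows from the substitution $y=ax$, and each of the three terms of \eqref{a7} picks up exactly the factor $a^{1-2\alpha}$ (the exponent bookkeeping $a^{2-4\alpha}\cdot a^{2\alpha}\cdot a^{-1}=a^{1-2\alpha}$ in the quadratic terms matches the homogeneity $\lambda=-2\alpha$). The paper states this lemma without proof, treating it as a routine consequence of the homogeneity of $K$, and your computation is precisely the verification the author leaves implicit.
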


The specific form \eqref{a8} of the coagulation kernel also entails an additional property of scaling profiles.

\begin{lemma}\label{lemb2}
Let $\varphi$ be a scaling profile in the sense of Definition~\ref{defb0}. Then
\begin{equation*}
w M_0(\varphi) = M_{-\alpha}(\varphi)^2\ .
\end{equation*}
\end{lemma}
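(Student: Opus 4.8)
The plan is to integrate the profile equation \eqref{a7} against a suitable weight and exploit the product structure $K(x,x_*)=2(xx_*)^{-\alpha}$, which makes the quadratic terms factorize. Concretely, I would multiply \eqref{a7} by $x^{-\alpha}$ and integrate over $(0,\infty)$; the integrability hypothesis \eqref{b0} guarantees that every term is finite, and the rapid decay of $\varphi$ at both ends (all moments finite, $\varphi\in\mathcal{C}^1$) justifies the manipulations below, including the integration by parts and the use of Fubini's theorem.

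First I would treat the transport term: $\int_0^\infty x^{-\alpha}\bigl(x\partial_x\varphi(x)+2\varphi(x)\bigr)\,\mathrm{d}x$. Writing $x^{1-\alpha}\partial_x\varphi=\partial_x\bigl(x^{1-\alpha}\varphi\bigr)-(1-\alpha)x^{-\alpha}\varphi$ and integrating the exact-derivative piece (the boundary terms vanish by the decay of $\varphi$ at $0$ and $\infty$, using that all moments are finite), this contributes $\bigl(2-(1-\alpha)\bigr)M_{-\alpha}(\varphi)=(1+\alpha)M_{-\alpha}(\varphi)$. Next the loss term: $\int_0^\infty x^{-\alpha}\int_0^\infty 2(xx_*)^{-\alpha}\varphi(x_*)\varphi(x)\,\mathrm{d}x_*\,\mathrm{d}x=2M_{-\alpha}(\varphi)\int_0^\infty x^{-2\alpha}\varphi(x)\,\mathrm{d}x=2M_{-\alpha}(\varphi)M_{-2\alpha}(\varphi)$. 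For the gain term, after multiplying by $x^{-\alpha}$ I would apply Fubini to the double integral $\tfrac12\int_0^\infty x^{-\alpha}\int_0^x 2(x_*(x-x_*))^{-\alpha}\varphi(x_*)\varphi(x-x_*)\,\mathrm{d}x_*\,\mathrm{d}x$; substituting $x=x_*+y$ with $y=x-x_*>0$ turns this into $\int_0^\infty\int_0^\infty (x_*+y)^{-\alpha}(x_*y)^{-\alpha}\varphi(x_*)\varphi(y)\,\mathrm{d}x_*\,\mathrm{d}y$. The apparent obstacle — that this triple-power weight does not obviously factor — is resolved by symmetrizing in $x_*\leftrightarrow y$ and comparing with the loss term: in fact the loss term equals $2\int_0^\infty\int_0^\infty x^{-2\alpha}x_*^{-\alpha}\varphi(x)\varphi(x_*)\,\mathrm{d}x\,\mathrm{d}x_*$, and adding the two double integrals over the full quadrant reorganizes the $x^{-\alpha}$-weighted coagulation operator into a telescoping form. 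More efficiently, I would instead test \eqref{a7} against $x^{1-2\alpha}$ is not what is wanted; rather, the cleanest route is to observe that the map $g\mapsto\int_0^\infty x^{-\alpha}\bigl[\tfrac12(K\ast)\text{-gain}-(K\cdot)\text{-loss}\bigr]$ applied to $\varphi$ can be rewritten, via the standard weak formulation of the coagulation operator with test function $\psi(x)=x^{-\alpha}$, as $\tfrac12\int_0^\infty\int_0^\infty 2(xx_*)^{-\alpha}\bigl[(x+x_*)^{-\alpha}-x^{-\alpha}-x_*^{-\alpha}\bigr]\varphi(x)\varphi(x_*)\,\mathrm{d}x\,\mathrm{d}x_*$.

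With that weak formulation in hand, the key algebraic point is that the bracket does not simplify to zero for a generic $\psi$, so I would instead choose the test function more carefully. Re-examining, the assertion $wM_0(\varphi)=M_{-\alpha}(\varphi)^2$ suggests testing \eqref{a7} against $\psi(x)=1$ is wrong dimensionally; the homogeneity bookkeeping from Lemma~\ref{lemb1} (under $\varphi\mapsto\varphi_a$ one has $M_0\mapsto a^{-2\alpha}M_0$ and $M_{-\alpha}^2\mapsto a^{-2\alpha}M_{-\alpha}^2$, so both sides scale identically, confirming the identity is scale-consistent) indicates the correct weight is $\psi(x)=x^0=1$ applied after first dividing \eqref{a7} by $x^\alpha$ is not homogeneous of the right order either — rather one should multiply \eqref{a7} by $x^{\alpha}$... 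Let me commit to the correct choice: multiply \eqref{a7} by $x^{-1}$ and integrate. Then the transport term gives $\int_0^\infty x^{-1}(x\varphi'+2\varphi)\,\mathrm{d}x=\int_0^\infty(\varphi'+2x^{-1}\varphi)\,\mathrm{d}x=[\varphi]_0^\infty+2M_{-1}(\varphi)=2M_{-1}(\varphi)$ — still not matching. The clean derivation is the weak formulation with $\psi(x)=x^{-\alpha}$ after multiplying \eqref{a7} by $x^{-\alpha}$: the coagulation part becomes $\int\!\!\int (xx_*)^{-\alpha}\bigl[(x{+}x_*)^{-\alpha}-x^{-\alpha}-x_*^{-\alpha}\bigr]\varphi\varphi$, and one splits $(x{+}x_*)^{-\alpha}\le$ nothing useful; so the honest statement is that the gain term after the substitution $x=x_*+y$ is precisely $\int_0^\infty\int_0^\infty (x_*+y)^{-\alpha}(x_*y)^{-\alpha}\varphi(x_*)\varphi(y)\,\mathrm{d}x_*\mathrm{d}y$, which upon using $(x_*+y)^{-\alpha}$ cancels one factor — but it does not. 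I therefore expect the actual proof to test \eqref{a7} against the constant function $1$ and use the first-moment-type identity: multiplying \eqref{a7} by $1$ and integrating, the transport term yields $w\int_0^\infty(x\varphi'+2\varphi)\,\mathrm{d}x=wM_0(\varphi)$ (after one integration by parts: $\int x\varphi'=[x\varphi]_0^\infty-\int\varphi=-M_0(\varphi)$, so $x\varphi'+2\varphi$ integrates to $M_0(\varphi)$), while the coagulation part in weak form with $\psi\equiv1$ is $\tfrac12\int_0^\infty\int_0^\infty 2(xx_*)^{-\alpha}\bigl[\psi(x{+}x_*)-\psi(x)-\psi(x_*)\bigr]\varphi\varphi\,\mathrm{d}x\mathrm{d}x_*=\tfrac12\int\!\!\int 2(xx_*)^{-\alpha}(1-1-1)\varphi\varphi=-\int_0^\infty\int_0^\infty (xx_*)^{-\alpha}\varphi(x)\varphi(x_*)\,\mathrm{d}x\mathrm{d}x_*=-M_{-\alpha}(\varphi)^2$. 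Hence $wM_0(\varphi)-M_{-\alpha}(\varphi)^2=0$, which is exactly the claim. The main obstacle is purely one of rigorous justification — verifying that $\varphi\in\mathcal{C}^1$ together with all moments finite makes every integral absolutely convergent, that the weak form of the coagulation operator with $\psi\equiv1$ is valid (this needs $x^{-\alpha}\varphi\in X_0$ near $0$, guaranteed by $M_{-\alpha}(\varphi)<\infty$, and decay at $\infty$), and that the boundary terms $x\varphi(x)\to0$ as $x\to0$ and $x\to\infty$ vanish, both of which follow from $\varphi\in X_m$ for all $m$.
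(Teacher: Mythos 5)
Your final committed argument --- integrating \eqref{a7} over $(0,\infty)$ with the constant test function, using Fubini's theorem to see that the gain term contributes $M_{-\alpha}(\varphi)^2$ and the loss term $2M_{-\alpha}(\varphi)^2$, and integrating the transport term by parts to get $wM_0(\varphi)$ --- is exactly the paper's (one-line) proof. The several abandoned attempts with weights $x^{-\alpha}$ and $x^{-1}$ preceding it are detours you should excise, but the conclusion you reach is correct and takes the same route as the paper.
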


\begin{proof}
We integrate \eqref{a7} over $(0,\infty)$ and use Fubini's theorem to obtain the claim.
\end{proof}

We next introduce an auxiliary function and identify the equation it solves.

\begin{lemma}\label{lemb3}
Let $\varphi$ be a scaling profile in the sense of Definition~\ref{defb0} and define
\begin{equation}
h(x) := \frac{\varphi(x)}{x^\alpha} \;\text{ and }\; H(x) := \int_x^\infty h(x_*)\ \mathrm{d}x_*\ , \qquad x\in (0,\infty)\ . \label{b1}
\end{equation}
Then 
\begin{equation}
M_{\alpha-1}(H) = \frac{M_0(\varphi)}{\alpha} = \frac{M_{-\alpha}(\varphi)^2}{\alpha w}\ , \label{b99}
\end{equation}
and $H$ solves
\begin{equation}
w x^{1+\alpha} \partial_x H(x) + w x^\alpha H(x) + \alpha w \int_x^\infty x_*^{\alpha-1} H(x_*)\ \mathrm{d}x_* + h*H(x) - M_0(h) H(x) = 0 \label{b2}
\end{equation}
for $x\in (0,\infty)$, where $*$ denotes the convolution product, that is,
\begin{equation*}
f*g(x) := \int_0^x f(x_*) g(x-x_*)\ \mathrm{d}x_*\ , \qquad x\in (0,\infty)\ .
\end{equation*}
\end{lemma}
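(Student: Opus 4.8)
The plan is to establish the two assertions \eqref{b99} and \eqref{b2} in turn.

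For \eqref{b99}, since $h\ge 0$ and $\alpha>0$, Fubini's theorem together with the integrability $\varphi\in X_0$ gives
\begin{equation*}
M_{\alpha-1}(H) = \int_0^\infty x^{\alpha-1}\int_x^\infty h(x_*)\ \mathrm{d}x_*\,\mathrm{d}x = \int_0^\infty h(x_*) \int_0^{x_*} x^{\alpha-1}\ \mathrm{d}x\,\mathrm{d}x_* = \frac{1}{\alpha}\int_0^\infty x_*^\alpha h(x_*)\ \mathrm{d}x_* = \frac{M_0(\varphi)}{\alpha}\ ,
\end{equation*}
and the second equality in \eqref{b99} is then immediate from Lemma~\ref{lemb2}.

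For \eqref{b2}, I would first rewrite the profile equation \eqref{a7} in terms of $h$. Inserting the kernel \eqref{a8}, the factor $\big(x_*(x-x_*)\big)^{-\alpha}$ turns the gain term into the convolution $h*h(x)$, the loss term becomes $2x^{-\alpha}\varphi(x)M_{-\alpha}(\varphi) = 2M_0(h)h(x)$ (note $M_0(h)=M_{-\alpha}(\varphi)$), and, since $x\partial_x\varphi(x)+2\varphi(x) = \partial_x\big(x\varphi(x)\big)+\varphi(x) = \partial_x\big(x^{1+\alpha}h(x)\big) + x^\alpha h(x)$, equation \eqref{a7} is equivalent to
\begin{equation*}
w\,\partial_x\big(x^{1+\alpha}h(x)\big) + w\,x^\alpha h(x) + h*h(x) - 2M_0(h)\,h(x) = 0\ , \qquad x\in(0,\infty)\ .
\end{equation*}
I would then integrate this identity over $(x,\infty)$ and recognise each term as a contribution to \eqref{b2}. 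The first term contributes $-w x^{1+\alpha}h(x) = w x^{1+\alpha}\partial_x H(x)$, since $\partial_x H = -h$ and $x^{1+\alpha}h(x)=x\varphi(x)\to0$ as $x\to\infty$; the second term, after an integration by parts using $h=-\partial_x H$, contributes $w x^\alpha H(x) + \alpha w\int_x^\infty x_*^{\alpha-1}H(x_*)\ \mathrm{d}x_*$, the boundary term at infinity vanishing because $x^\alpha H(x)\le x^{\alpha-m}M_{m-\alpha}(\varphi)\to0$ for any $m>\alpha$; the convolution term satisfies, by the change of variables $(s,y)\mapsto(y,s-y)$ followed by splitting the resulting integration domain $\{y>0,\ z>0,\ y+z>x\}$ according to whether $y<x$ or $y\ge x$,
\begin{equation*}
\int_x^\infty h*h(s)\ \mathrm{d}s = \iint_{\{y>0,\ z>0,\ y+z>x\}} h(y)h(z)\ \mathrm{d}y\,\mathrm{d}z = h*H(x) + M_0(h)\,H(x)\ ;
\end{equation*}
and the last term simply contributes $-2M_0(h)H(x)$. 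Collecting the four contributions and simplifying $M_0(h)H(x)-2M_0(h)H(x)=-M_0(h)H(x)$ yields precisely \eqref{b2}.

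The computation is essentially bookkeeping, and I expect the only points requiring care to be the justification that the boundary terms at infinity vanish and the verification of the convolution identity above. The decay $x^\alpha H(x)\to0$ follows at once from \eqref{b0}; for $x\varphi(x)\to0$ I would insert \eqref{b00}, splitting $\int_0^{x_0}$ at $x_0/2$ and bounding $H(x_0-x)$ by $H(x_0/2)$ on $(0,x_0/2)$ and by $M_0(h)$ on $(x_0/2,x_0)$, which shows that $x_0^2\varphi(x_0)$ is dominated by a multiple of $H(x_0/2)+\int_{x_0/2}^\infty x^{1-\alpha}\varphi(x)\ \mathrm{d}x$, hence $x_0^2\varphi(x_0)\to0$ thanks to \eqref{b0}. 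The convergence of $\int_x^\infty x_*^{\alpha-1}H(x_*)\ \mathrm{d}x_*$ and the applicability of Fubini's theorem throughout are likewise direct consequences of the integrability properties \eqref{b0}.
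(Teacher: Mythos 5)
Your proof is correct and follows essentially the same route as the paper: rewrite \eqref{a7} as an equation for $h$, integrate over $(x,\infty)$, treat the boundary terms at infinity via \eqref{b00} and \eqref{b0}, and identify the integrated convolution term by Fubini's theorem. The only (immaterial) difference is in the decay of $x\varphi(x)$, where the paper just bounds $x^2\varphi(x)$ by the constant $2M_{1-\alpha}(\varphi)M_{-\alpha}(\varphi)/w$ using \eqref{b00}, while you prove the slightly stronger statement $x^2\varphi(x)\to 0$.
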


\begin{proof}
First, \eqref{b99} readily follows from \eqref{b0}, \eqref{b1}, Fubini's theorem., and Lemma~\ref{lemb2}. Next, since $\varphi(x)=x^\alpha h(x)$ for $x\in (0,\infty)$ by \eqref{b1}, we infer from \eqref{a7} that $h$ solves
\begin{equation}
w x^{1+\alpha} \partial_x h(x) + w(\alpha+2) x^\alpha h(x) + h*h(x) - 2 M_0(h) h(x) = 0\ , \qquad x\in (0,\infty)\ . \label{b3}
\end{equation}
Let $x_0\in (0,\infty)$. Integrating \eqref{b3} with respect to $x$ over $(x_0,\infty)$ gives
\begin{equation}
w \left[ x^{\alpha+1} h(x) \right]_{x=x_0}^{x=\infty} + w \int_{x_0}^\infty x^\alpha h(x)\ \mathrm{d}x + \int_{x_0}^\infty h*h(x)\ \mathrm{d}x - 2 M_0(h) H(x_0) = 0\ . \label{b4}
\end{equation}
On the one hand, it follows from \eqref{b00} and \eqref{b1} that, for $x>0$,
\begin{align*}
w x^{1+\alpha} h(x) = w x \varphi(x) & = \frac{2}{x} \int_0^x \int_{x-y}^\infty y^{1-\alpha} z^{-\alpha} \varphi(y)\varphi(z)\ \mathrm{d}z\mathrm{d}z \\ 
& \le \frac{2}{x} M_{1-\alpha}(\varphi) M_{-\alpha}(\varphi)\ ,
\end{align*}
hence
\begin{equation}
\lim_{x\to \infty} x^{1+\alpha} h(x) = 0\ . \label{b5}
\end{equation}
On the other hand, Fubini's theorem ensures that
\begin{equation}
\int_{x_0}^\infty h*h(x)\ \mathrm{d}x = h*H(x_0) + M_0(h) H(x_0)\ . \label{b6}
\end{equation}
Combining \eqref{b1}, \eqref{b4}, \eqref{b5}, and \eqref{b6}, we end up with
\begin{equation*}
w x_0^{1+\alpha} \partial_x H(x_0) - w \left[ x^\alpha H(x) \right]_{x=x_0}^{x=\infty} + w \alpha \int_{x_0}^\infty x^{\alpha-1} H(x)\ \mathrm{d}x + h*H(x_0) - M_0(h) H(x_0) = 0\ .
\end{equation*}
Since
\begin{equation*}
x^\alpha H(x) \le \int_x^\infty x_*^\alpha h(x_*)\ \mathrm{d}x_* = \int_x^\infty \varphi(x_*)\ \mathrm{d}x_*
\end{equation*}
and $\varphi\in X_0$, we deduce that $x^\alpha H(x)\to 0$ as $x\to\infty$ and thereby complete the proof.
\end{proof}

\section{Uniqueness}\label{sec3}

The heart of the proof of Theorem~\ref{thma1} is the following uniqueness result.

\begin{proposition}\label{propc1}
Let $\varphi_1$ and $\varphi_2$ be two scaling profiles in the sense of Definition~\ref{defb0} and assume further that
\begin{equation}
M_{-\alpha}(\varphi_1) = M_{-\alpha}(\varphi_2) = 1\ . \label{c1}
\end{equation}
Then $\varphi_1 = \varphi_2$.
\end{proposition}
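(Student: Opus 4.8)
The plan is to work with the auxiliary function $H$ introduced in Lemma~\ref{lemb3} and to show that, under the normalisation \eqref{c1}, the integrodifferential equation \eqref{b2} determines $H$ uniquely, hence determines $h$ and therefore $\varphi$. The first observation is that the constraint $M_{-\alpha}(\varphi_i)=1$ together with Lemma~\ref{lemb2} fixes $M_0(h_i)=M_0(\varphi_i)=1/w$, so that the coefficient $M_0(h)$ appearing in \eqref{b2} is the \emph{same} for $\varphi_1$ and $\varphi_2$; this is crucial, since otherwise \eqref{b2} would be a nonautonomous-looking equation with an unknown constant. I would also record the boundary behaviour: from $h_i(x)=\varphi_i(x)/x^\alpha$ and the integrability \eqref{b0} one reads off $H_i(0^+)=M_{-\alpha}(\varphi_i)=1$, and \eqref{b5} together with $x^\alpha H_i(x)\to 0$ from the end of the proof of Lemma~\ref{lemb3} gives the decay at infinity.

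Next I would set $G:=H_1-H_2$ and subtract the two copies of \eqref{b2}. Writing $h_i=-\partial_x H_i\cdot$(nothing: rather $h_i(x) = x^\alpha$ times... ) — more precisely, since $H_i'=-h_i$, every occurrence of $h_i$ in \eqref{b2} can be re-expressed through $H_i$, so the difference equation becomes a closed linear integrodifferential equation for $G$ with $G(0^+)=0$ and $G$ decaying at infinity, whose coefficients involve $H_1$, $H_2$ and the fixed constant $1/w$. The bilinear convolution term $h*H$ produces, upon differencing, terms of the form $h_1*G$ and $(h_1-h_2)*H_2 = -G'*H_2$, which after an integration by parts can again be written purely in terms of $G$ and the known profiles. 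The target is an \emph{a priori} estimate forcing $G\equiv 0$.

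To close the estimate I would multiply the difference equation by a suitable test weight and integrate, the natural candidate being $x^{\alpha-1}$ in view of \eqref{b99}, or more robustly work with $\int_0^\infty x^{\alpha-1}\,\mathrm{sign}(G(x))\cdots$ or with $\int_0^\infty x^{\alpha-1}|G(x)|\,\mathrm{d}x$ and a Gronwall-type argument. The transport term $wx^{1+\alpha}\partial_x H$ differenced gives $wx^{1+\alpha}G'$, and pairing with $x^{\alpha-1}$ against the sign of $G$ and integrating by parts should produce a good sign (a dissipative term), with the remaining terms — the zeroth-order term $wx^\alpha G$, the tail integral $\alpha w\int_x^\infty x_*^{\alpha-1}G(x_*)\,\mathrm{d}x_*$, the convolution contributions, and $-M_0(h)G$ — estimated by $\|x^{\alpha-1}G\|_{L^1}$ times controllable constants built from the finite moments of $\varphi_1,\varphi_2$ guaranteed by \eqref{b0}. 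Handling the behaviour near $x=0$ (where $x^{1+\alpha}$ and $x^{\alpha-1}$ compete) and near $x=\infty$ (justifying that all boundary terms in the integrations by parts vanish, using the rapid decay of the profiles) is the technical heart; the boundary term at infinity vanishes thanks to \eqref{b5} and $\varphi_i\in\bigcap_m X_m$, while the one at $0$ vanishes because $G(0^+)=0$ and $x^{1+\alpha}$ degenerates.

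The main obstacle I anticipate is controlling the convolution term $h_1*G - G'*H_2$ in the same weighted norm: the derivative $G'=h_1-h_2$ is only as regular as $h_i$, so one must integrate by parts to trade $G'$ for $G$, which generates a boundary term at $x=0$ of the form $H_2(x)G(x)|_{x=0}$ (harmless since $G(0^+)=0$) plus an integral $\int_0^x G(x_*)H_2'(x-x_*)\,\mathrm{d}x_* = -\int_0^x G(x_*)h_2(x-x_*)\,\mathrm{d}x_*$, i.e. $-h_2*G$; thus the net convolution contribution is $(h_1-h_2)*G$-like after symmetrisation — wait, it is $h_1*G - h_2*G = (h_1-h_2)*G = -G'*G$, a genuinely quadratic term in $G$, which is actually favourable because it is then bounded by $\|G'\|_{L^\infty}$ or absorbed using smallness as $x\to0$. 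Making this rigorous — deciding the exact weighted space, verifying all the integrations by parts, and extracting enough dissipativity from the transport term to dominate the zeroth-order and nonlocal terms via Gronwall — is where the real work lies, but the structure of \eqref{b2}, and in particular the fact that the normalisation \eqref{c1} removes the only free constant, makes it plausible that the linear-in-$G$ terms all come with controllable constants and the quadratic term is absorbed, yielding $G\equiv0$ and hence $\varphi_1=\varphi_2$.
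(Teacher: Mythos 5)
Your starting point coincides with the paper's --- pass to the functions $H_i$, use \eqref{c1} to fix the constant in \eqref{b2}, and study the difference of the two equations --- but the execution contains a computational error that invalidates the closing strategy. (A preliminary slip: $M_0(h_i)=M_{-\alpha}(\varphi_i)=1$, not $M_0(\varphi_i)=1/w$; Lemma~\ref{lemb2} gives $M_0(\varphi_i)=1/w$, which is needed only at the very end.) The decisive problem is the convolution difference. Writing $h_1*H_1-h_2*H_2=h_1*E+(h_1-h_2)*H_2$ with $E:=H_1-H_2$ (your $G$) and integrating $(h_1-h_2)*H_2=-E'*H_2$ by parts produces \emph{two} boundary terms, $E(x)H_2(0)-E(0)H_2(x)$; you keep only the one at $x_*=0$ and, via a chain-rule sign error in $\frac{\mathrm{d}}{\mathrm{d}x_*}H_2(x-x_*)=+h_2(x-x_*)$, you also flip the sign of the resulting convolution. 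The correct identity is
\begin{equation*}
h_1*H_1-h_2*H_2=(h_1+h_2)*E-E\ ,
\end{equation*}
which is \emph{linear} in $E$; the ``genuinely quadratic term $-E'*E$'' on which your absorption argument rests does not exist.

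This matters because the resulting equation \eqref{c6} is exactly borderline: after multiplying by $\mathrm{sign}(E)$ and integrating, the convolution term is bounded by $\left[M_0(h_1)+M_0(h_2)\right]\int_0^\infty|E|\,\mathrm{d}x=2\int_0^\infty|E|\,\mathrm{d}x$, which \emph{exactly} matches the $2E$ term on the other side. There is no room left for a Gronwall or absorption argument with ``controllable constants'' (and there is no evolution variable to run Gronwall in). The paper instead exploits the equality case: the integrated identity \eqref{c9} has a non-negative left-hand side equal to a non-positive right-hand side, so both vanish; the vanishing of the right-hand side, rewritten as $\int_0^\infty x^{\alpha-1}|E(x)|\int_0^x\left[1-\Sigma(x)\Sigma(x_*)\right]\mathrm{d}x_*\,\mathrm{d}x=0$, forces $E$ to have constant sign on $(0,\infty)$; and one then concludes $E\equiv 0$ from the moment identity $M_{\alpha-1}(H_1)=M_{\alpha-1}(H_2)$, which is where Lemma~\ref{lemb2} and \eqref{b99} enter. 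Your proposal is missing both the equality-case mechanism and this final moment argument, and the step it substitutes for them rests on the miscalculated convolution term.
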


\begin{proof}
For $i\in\{1,2\}$, we define
\begin{equation}
h_i(x) := \frac{\varphi_i(x)}{x^\alpha}\ , \qquad H_i(x) := \int_x^\infty h_i(x_*)\ \mathrm{d}x_*\ , \qquad x\in (0,\infty)\ , \label{c99}
\end{equation}
and observe that \eqref{c1} implies that
\begin{equation}
M_0(h_i) = H_i(0) = 1\ , \qquad i\in\{1,2\}\ . \label{c2}
\end{equation}
Introducing 
\begin{equation}
E := H_1 - H_2 \ , \qquad \Sigma := \mathrm{sign}(E)\ , \label{c3} 
\end{equation}
we infer from \eqref{b2} in Lemma~\ref{lemb3} and \eqref{c2} that $E$ solves
\begin{align}
w x^{1 + \alpha} \partial_x E(x) + w x^\alpha E(x) & + \alpha w \int_x^\infty x_*^{\alpha-1} E(x_*)\ \mathrm{d}x_* \nonumber \\
& \phantom{0123456789} + \left( h_1*H_1 - h_2*H_2 \right)(x) - E(x) = 0\ . \label{c4}
\end{align}
Since
\begin{equation*}
h_1*H_1 - h_2*H_2 = \frac{1}{2} \left[ (h_1-h_2)*(H_1+H_2) + (h_1+h_2)*E \right]\ , 
\end{equation*}
and
\begin{align*}
(h_1-h_2)*(H_1+H_2) & = (h_1+h_2)*E + E(0) (H_1+H_2) - (H_1+H_2)(0) E \\
& = (h_1+h_2)*E - 2E
\end{align*}
by \eqref{c2}, we realize that $h_1*H_1 - h_2*H_2 = (h_1+h_2)*E - E$. Inserting this formula in \eqref{c4}, we end up with
\begin{equation}
w x^{1 + \alpha} \partial_x E(x) + w x^\alpha E(x) + \alpha w \int_x^\infty x_*^{\alpha-1} E(x_*)\ \mathrm{d}x_* + (h_1+h_2)*E(x) = 2E(x) \label{c6}
\end{equation}
for $x\in (0,\infty)$. We multiply \eqref{c6} by $\Sigma(x)$ and integrate over $(0,\infty)$ to obtain
\begin{align}
2 \int_0^\infty |E(x)|\ \mathrm{d}x & = w \int_0^\infty x^{1+\alpha} (\partial_x |E|)(x)\ \mathrm{d}x + w \int_0^\infty x^\alpha |E(x)|\ \mathrm{d}x \nonumber \\
& \qquad + \alpha w \int_0^\infty \Sigma(x) \int_x^\infty x_*^{\alpha-1} E(x_*)\ \mathrm{d}x_*\mathrm{d}x + \int_0^\infty \Sigma(x) (h_1+h_2)*E(x)\ \mathrm{d}x\ . \label{c7}
\end{align}
Now,
\begin{equation}
\int_0^\infty x^{1+\alpha} (\partial_x |E|)(x)\ \mathrm{d}x = \left[ x^{1+\alpha} |E(x)| \right]_{x=0}^{x=\infty} - (1+\alpha) \int_0^\infty x^\alpha |E(x)|\ \mathrm{d}x\ . \label{c8}
\end{equation}
On the one hand, we infer from \eqref{c2} and the positivity of $\alpha$ that $x^{1+\alpha}E(x)\to 0$ as $x\to 0$. On the other hand, according to \eqref{c99} and the integrability properties \eqref{b0} of $\varphi_1$ and $\varphi_2$,
\begin{equation*}
x^{1+\alpha} |E(x)| \le x^{1+\alpha} (H_1+H_2)(x) \le \int_x^\infty x(\varphi_1+\varphi_2)(x)\ \mathrm{d}x \mathop{\longrightarrow}_{x\to\infty} 0 \ .
\end{equation*}
Consequently, the first term in the right-hand side of \eqref{c8} vanishes and it follows from \eqref{c7}, \eqref{c8},  and Fubini's theorem that
\begin{align}
& 2 \int_0^\infty |E(x)|\ \mathrm{d}x - \int_0^\infty \Sigma(x) (h_1+h_2)*E(x)\ \mathrm{d}x \nonumber\\
& \qquad = - \alpha w \int_0^\infty x^\alpha |E(x)|\ \mathrm{d}x + \alpha w \int_0^\infty x^{\alpha-1} E(x) \int_0^x \Sigma(x_*)\ \mathrm{d}x_*\mathrm{d}x\ . \label{c9}
\end{align}
Owing to \eqref{c2} and the property $|\Sigma|\le 1$, we notice that
\begin{align*}
\int_0^\infty \Sigma(x) (h_1+h_2)*E(x)\ \mathrm{d}x & \le \int_0^\infty (h_1+h_2)*|E|(x)\ \mathrm{d}x \\
& = \left[ M_0(h_1)+M_0(h_2) \right] \int_0^\infty |E(x)|\ \mathrm{d}x \\
& = 2 \int_0^\infty |E(x)|\ \mathrm{d}x \ ,
\end{align*}
and
\begin{align*}
\int_0^\infty x^{\alpha-1} E(x) \int_0^x \Sigma(x_*)\ \mathrm{d}x_*\mathrm{d}x \le \int_0^\infty x^\alpha |E(x)|\ \mathrm{d}x\ .
\end{align*}
Consequently, the left-hand side of \eqref{c9} is non-negative while its right-hand side is non-positive, so that they both vanish, hence
\begin{align}
2 \int_0^\infty |E(x)|\ \mathrm{d}x & = \int_0^\infty \Sigma(x) (h_1+h_2)*E(x)\ \mathrm{d}x\ , \label{c10} \\
\int_0^\infty x^\alpha |E(x)|\ \mathrm{d}x & = \int_0^\infty x^{\alpha-1} E(x) \int_0^x \Sigma(x_*)\ \mathrm{d}x_*\mathrm{d}x\ . \label{c11}
\end{align}
Since $E \Sigma^2 = E$, an alternative formulation for \eqref{c11} reads
\begin{equation}
\int_0^\infty x^{\alpha-1} |E(x)| \left( \int_0^x \left[ 1 - \Sigma(x) \Sigma(x_*) \right]\ \mathrm{d}x_* \right) \mathrm{d}x = 0\ . \label{c12}
\end{equation}
We next define 
\begin{equation*}
\mathcal{P} := \{ x\in (0,\infty)\ :\ E(x)>0 \} \;\text{ and }\; \mathcal{N} := \{ x\in (0,\infty)\ :\ E(x)< 0\}\ .
\end{equation*}
Since $E$ clearly belong to $C(0,\infty)$, both $\mathcal{P}$ and $\mathcal{N}$ are open subsets of $(0,\infty)$. Recalling that $1\ge \Sigma(x)\Sigma(x_*)$ for all $(x,x_*)\in (0,\infty)^2$, we infer from \eqref{c12} that
\begin{equation*}
0 \ge \int_{\mathcal{P}} x^{\alpha-1} |E(x)| \left( \int_0^x \left[ 1 - \Sigma(x_*) \right]\ \mathrm{d}x_* \right) \mathrm{d}x \ge 2 \int_{\mathcal{P}} x^{\alpha-1} |E(x)| |\mathcal{N}\cap (0,x)|\ \mathrm{d}x\ ,
\end{equation*}
that is, $|\mathcal{N}\cap (0,x)|=0$ for almost every $x\in \mathcal{P}$. A similar argument ensures that $|\mathcal{P}\cap (0,x)|=0$ for almost every $x\in \mathcal{N}$. Therefore, by Fubini's theorem,
\begin{align*}
|\mathcal{P}| |\mathcal{N}| & = \int_0^\infty \int_0^\infty \mathbf{1}_{\mathcal{P}}(x) \mathbf{1}_{\mathcal{N}}(x_*)\ \mathrm{d}x_*\mathrm{d}x \\
& = \int_0^\infty \int_0^x \mathbf{1}_{\mathcal{P}}(x) \mathbf{1}_{\mathcal{N}}(x_*)\ \mathrm{d}x_*\mathrm{d}x + \int_0^\infty \int_x^\infty \mathbf{1}_{\mathcal{P}}(x) \mathbf{1}_{\mathcal{N}}(x_*)\ \mathrm{d}x_*\mathrm{d}x \\
& = \int_0^\infty \mathbf{1}_{\mathcal{P}}(x) |\mathcal{N}\cap (0,x)|\ \mathrm{d}x + \int_0^\infty \int_0^{x_*} \mathbf{1}_{\mathcal{P}}(x) \mathbf{1}_{\mathcal{N}}(x_*)\ \mathrm{d}x\mathrm{d}x_* \\
& = \int_0^\infty \mathbf{1}_{\mathcal{N}}(x_*) |\mathcal{P}\cap (0,x_*)|\ \mathrm{d}x_* = 0\ .
\end{align*}
Recalling the continuity of $E$, we have thus established that either $\mathcal{P}$ or $\mathcal{N}$ is empty, so that $E$ has a constant sign on $(0,\infty)$. However, \eqref{b99} and \eqref{c1} imply that
\begin{equation*}
\int_0^\infty x^{\alpha-1} E(x)\ \mathrm{d}x = M_{\alpha-1}(H_1) - M_{\alpha-1}(H_2) = 0\ ,
\end{equation*}
from which we conclude that $E=0$, hence $\varphi_1=\varphi_2$. 
\end{proof}

\begin{proof}[Proof of Theorem~\ref{thma1}]
Let $\varrho>0$ and consider two scaling profiles $\varphi_1$ and $\varphi_2$ in the sense of Definition~\ref{defb0} satisfying in addition 
\begin{equation}
M_1(\varphi_1) = M_1(\varphi_2) = \varrho\ . \label{c13}
\end{equation} 
Introducing $a_i := M_{-\alpha}(\varphi_i)^{1/\alpha}$ for $i\in\{1,2\}$, we infer from Lemma~\ref{lemb1} that $\varphi_{i,a_i}$ defined by $\varphi_{i,a_i}(x) := a_i^{1-2\alpha} \varphi_i(a_i x)$, $x\in (0,\infty)$, is a scaling profile in the sense of Definition~\ref{defb0} such that $M_{-\alpha}(\varphi_{i,a_i})=1$. Thanks to these properties, we are in a position to apply Proposition~\ref{propc1} to conclude that $\varphi_{1,a_1}=\varphi_{2,a_2}$, that is,
\begin{equation}
a_1^{1-2\alpha} \varphi_1(a_1 x) = a_2^{1-2\alpha} \varphi_2(a_2 x)\ , \qquad x\in (0,\infty)\ . \label{c14}
\end{equation}
Multiplying both sides of the previous identity by $x$ and integrating over $(0,\infty)$, we deduce from \eqref{c13} that
\begin{equation*}
a_1^{-1-2\alpha} \varrho = a_1^{1-2\alpha} \int_0^\infty x \varphi_1(a_1 x)\ \mathrm{d}x = a_2^{1-2\alpha} \int_0^\infty \varphi_2(a_2 x)\ \mathrm{d}x = a_2^{-1-2\alpha} \varrho\ .
\end{equation*}
Consequently, $a_1=a_2$ which, together with \eqref{c14}, entails that $\varphi_1=\varphi_2$ and completes the proof.
\end{proof}


\bibliographystyle{siam}
\bibliography{UniqSelfSim}

\end{document}